\newtheorem{thm}{Theorem}[section]
\theoremstyle{definition}
\numberwithin{equation}{section}
\newcommand{\beq}{\begin{equation*}}
\newcommand{\eeq}{\end{equation*}}
\newcommand{\beqn}{\begin{equation}}
\newcommand{\eeqn}{\end{equation}}
\newcommand{\dd}{\mathrm{d}}
\newcommand{\ii}{\mathrm{i}}
\newcommand{\ee}{\mathrm{e}}
\newcommand{\ph}{\varphi}
\newcommand{\E}{\mathbb{E}}
\newcommand{\W}{\mathbb{P}}
\newcommand{\Var}{\mathbb{V}\mathrm{ar}}
\newcommand{\EARk}{\mathbb{E}\big[A_R^k\big]}
\newcommand{\EAk}{\mathbb{E}\big[A_\ka^k\big]}
\newcommand{\EURk}{\mathbb{E}\big[U_R^k\big]}
\newcommand{\EUk}{\mathbb{E}\big[U_\ka^k\big]}
\newcommand{\ka}{\varkappa}
\newcommand{\rh}{\varrho}
\newcommand{\Sigm}{\varSigma}
\newcommand{\Om}{\varOmega}
\begin{document}

\title{Probabilistic properties of the elliptic motion}
\author{Uwe B\"asel}
\date{} 
\maketitle
\thispagestyle{empty}
\begin{abstract}
\noindent In this paper we consider the plane elliptic motion which occurs if the moving centrode is a circle of radius $r$ and the fixed centrode a circle of radius $2r$. Every point of the moving plane generates an ellipse in the fixed plane. Let a disk of radius $R$, $0 \le R < \infty$, concentric to the moving centrode be attached to the moving plane. If a point $P$ is chosen at random from this disk, then the area and the perimeter of the ellipse generated by $P$ are random variables. We determine the moments and the distributions of these random variables for the case that $P$ is uniformly distributed over the area of the disk.\\[0.2cm]
\textbf{2010 Mathematics Subject Classification:}
52A22, 
53C65, 
60D05, 
53A17, 
51N20, 
33E05  
\\[0.2cm]
\textbf{Keywords:} elliptic motion, random ellipses, area moments, area distribution, perimeter moments, perimeter distribution, centrode, elliptic integrals
\end{abstract}

\section{Introduction}

We consider a fixed Euclidean plane $S$ with a Cartesian frame 
of origin $O$ and $x,y$ axes, and a moving Euclidean plane $\Sigm$ with a Cartesian frame 
of origin $\Om$ and $\xi,\eta$ axes. To $\Sigm$ we attach a circle $C_1$ with centre $\Om$ and radius $r$; to $S$ we attach a circle $C_0$ with centre $O$ and radius $2r$ (see Fig.\ \ref{Fig:Ell_motion}).

If $C_1$ rolls inside $C_0$, then every point $P$ fixed in $\Sigm$ generates an ellipse in $S$. Therefore, this motion is called {\em elliptic motion}. If $P = \Om$, then the ellipse is a circle with centre $O$ and radius $r$; if $P\in C_1$, then the ellipse degenerates to a (double) line segment of length $4r$ which is one diameter of $C_0$. \cite[pp.\ 2-3, 8-9]{Krause_Carl}, \cite[pp.\ 14-15]{Blaschke_Mueller}  

We denote by $\ph$ the angle between the $x$-axis and the line segment $
\overline{\rule{0pt}{2.75mm}{} O\Om}$. W.l.o.g.\ we assume that for $\ph = 0$ the $\xi$-axis lies on the $x$-axis and both axes have equal direction. Then the equation in complex form of the ellipse generated by $P$ is given by 
\beqn \label{Eq:Complex_form}
  X 
= X(\ph) = r\,\ee^{\ii\ph} + \varXi\,\ee^{-\ii\ph}\,,\quad
  0 \le \ph < 2\pi\,,
\eeqn
with
\begin{itemize}[leftmargin=0.75cm] \setlength{\itemsep}{-3pt}
\item[1)] $\varXi := \rh\,\ee^{\ii\alpha}$, where $\rh$, $\alpha$ are the polar coordinates of $P$ with respect to the $\xi,\eta$-frame, or
\item[2)] $\varXi := \xi+\ii\eta$, where $\xi$, $\eta$ are the Cartesian coordinates of $P$ with respect to the $\xi,\eta$-frame.
\end{itemize}       


\noindent
In the first case, from \eqref{Eq:Complex_form} we get
\beqn \label{Eq:Param_Represent1}
\left.
\begin{array}{l@{\;=\;}c@{\;}c@{\;}l}
  x & r\cos\ph & + & \rh\cos(\ph-\alpha)\,,\\[0.1cm]
  y & r\sin\ph & - & \rh\sin(\ph-\alpha)
\end{array}
\right\}
\eeqn
as parametric representation of the ellipse, and in the second case,
\beqn \label{Eq:Param_Represent2}
\left.
\begin{array}{l@{\;=\;}c@{\;}c@{\;}c@{\;}c@{\;}l}
  x & r\cos\ph & + & \xi\cos\ph & + & \eta\sin\ph\,,\\[0.1cm]
  y & r\sin\ph & + & \eta\cos\ph & - & \xi\sin\ph\,.
\end{array}
\right\}
\eeqn

\begin{figure}[H]
  \begin{center}
	\includegraphics[scale=0.7]{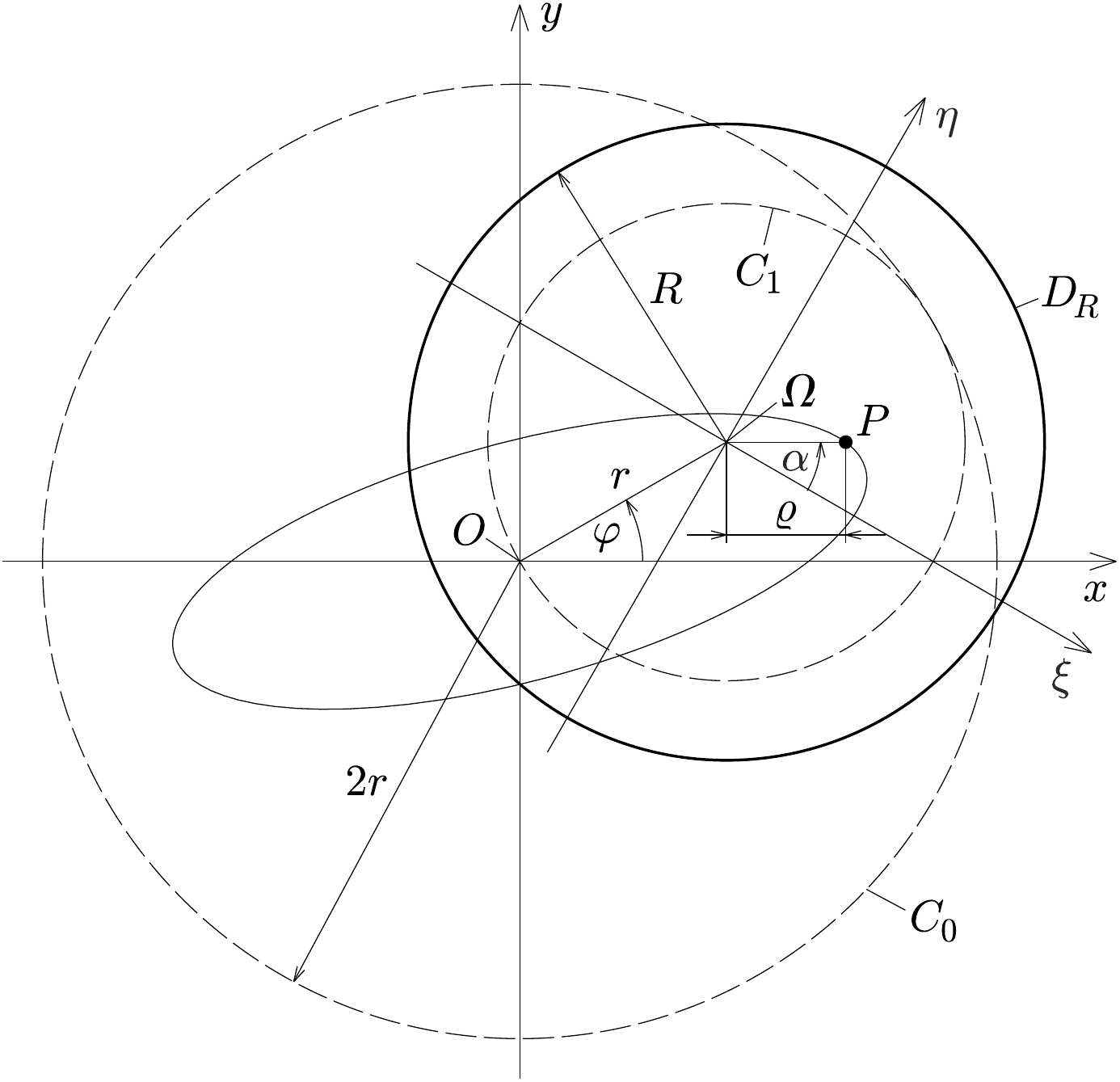}
  \end{center}
  \vspace{-0.4cm}
  \caption{\label{Fig:Ell_motion} Elliptic motion}
  \vspace{0.6cm}
  \begin{center}
	\includegraphics[scale=0.31]{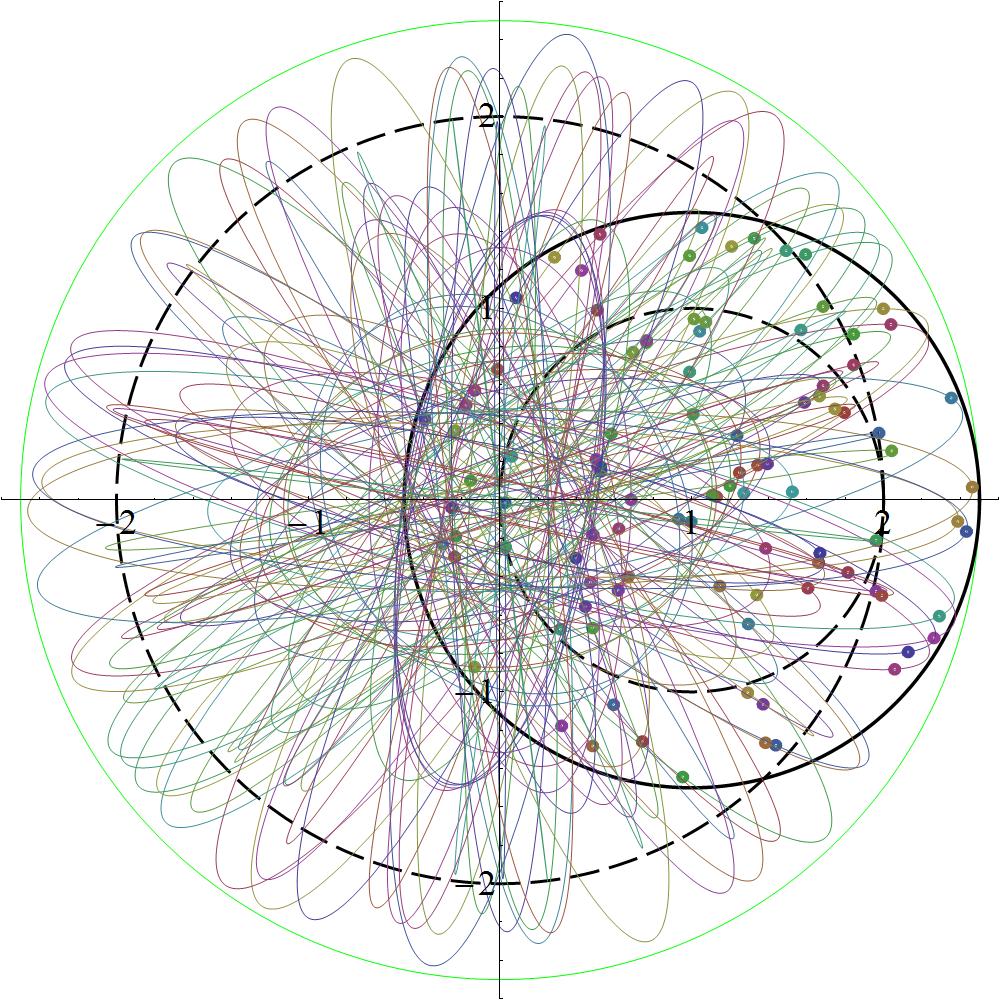}
  \end{center}
  \vspace{-0.4cm}
  \caption{\label{Fig:Random_Ellipses01} 100 random ellipses and their generating points; $r=1$, $R=1.5$}
\end{figure}

\newpage
\noindent
From \eqref{Eq:Param_Represent1} one finds that the length of the semi-major axis is equal to $r+\rh$, and the length of the semi-minor axis equal to $|r-\rh|$. Hence all points $P \in \Sigm$ with equal distance $\rh$ from $\Om$ generate congruent ellipses. The centres of all ellipses lie in $O$. The angle between the $x$-axis and the major-axis of an ellipse is equal to $\alpha/2$. 

$C_0$ is the fixed centrode and $C_1$ the moving centrode of the elliptic motion. It is possible to get the equations of $C_0$ and $C_1$ backwards from the equation(s) of the motion \eqref{Eq:Complex_form}, \eqref{Eq:Param_Represent1} or \eqref{Eq:Param_Represent2}. \cite[p.\ 8-9]{Krause_Carl}, \cite[p.\ 14-15]{Blaschke_Mueller} (For the notions of the fixed and the moving centrode see e.\,g \cite[pp.\ 257-259]{Bottema_Roth}.) 

Now we consider a disk 
\beqn \label{Eq:Disk}
  D_R
= \left\{(\xi,\eta)\in\Sigm\colon 0\le\xi^2+\eta^2\le R^2\right\},\quad
  0 \le R < \infty\,.
\eeqn
If $P$ is chosen at random from $D_R$, then the area and the perimeter of the ellipse generated by $P$ are random variables which we denote by $A_R$ and $U_R$, respectively. In this paper we determine the moments and the distributions of these random variables for the case that $P$ is uniformly distributed over the area of $D_R$. Since $r$ is no essential parameter, we identify $A_\ka \equiv A_R$ and $U_\ka \equiv U_R$, where $\ka = R/r$ 

Fig.\ \ref{Fig:Random_Ellipses01} shows the result of a simulation with 100 random points and ellipses.

\section{Moments of the area}

The moments of the random area $A_R$ enclosed by the ellipse generated by a random point $\in D_R$ (see \eqref{Eq:Disk}) are given by
\beq
  \EARk
= \left(\int_{P\in D_R}A^k\,\dd P\right)\bigg/
  \left(\int_{P\in D_R}\dd P\right),
\eeq
where $A$ is the area of the ellipse generated by the point $P \in D_R$, and $\dd P = \dd\xi \wedge \dd\eta$ is the density for sets of points in the plane. Up to a constant factor this density is the only one that is invariant under motions \cite[p.\ 13]{Santalo}.  Due to the point symmetry we use the polar coordinates $\rh,\alpha$. With $\xi = \rh\cos\alpha$, $\eta = \rh\sin\alpha$ we get
\begin{align*}
  \dd\xi
= {} & \frac{\partial\xi}{\partial\rh}\,\dd\rh
+ \frac{\partial\xi}{\partial\alpha}\,\dd\alpha
= \cos\alpha\,\dd\rh - \rh\sin\alpha\,\dd\alpha\,,\\[0.1cm]
  \dd\eta
= {} & \frac{\partial\eta}{\partial\rh}\,\dd\rh
+ \frac{\partial\eta}{\partial\alpha}\,\dd\alpha
= \sin\alpha\,\dd\rh + \rh\cos\alpha\,\dd\alpha
\end{align*}
and 
\begin{align*}
  \dd\xi \wedge \dd\eta
= {} & \cos\alpha\sin\alpha\;\dd\rh\wedge\dd\rh
+ \rh\cos^2\alpha\;\dd\rh\wedge\dd\alpha
- \rh\sin^2\alpha\;\dd\alpha\wedge\dd\rh
- \rh^2\sin\alpha\cos\alpha\;\dd\alpha\wedge\dd\alpha\\
= {} & \rh\,\dd\rh\wedge\dd\alpha\,,
\end{align*}
hence
\beqn \label{Eq:ExpectedArea01}
  \EARk
= \frac{1}{\pi R^2}\int_{\alpha=0}^{2\pi}\,\int_{\rh=0}^R
  A^k(\rh)\,\rh\,\dd\rh\,\dd\alpha
= \frac{2}{R^2}\int_{\rh=0}^R A^k(\rh)\,\rh\,\dd\rh\,.
\eeqn

The area enclosed by an ellipse generated by a point $P\in D_R$ with distance $\rh$ from $\Om$ is given by
\beqn \label{Eq:A(rh)}
  A(\rh)
= \left\{\begin{array}{l@{\quad\mbox{if}\quad}l}
	\pi\left(r^2-\rh^2\right) & 0\le\rh\le r\,,\\[0.1cm]
	\pi\left(\rh^2-r^2\right) & r<\rh<\infty\,.
  \end{array}\right.
\eeqn 
With $w=\rh/r$, the area \eqref{Eq:A(rh)} function may be written as
\beqn \label{Eq:A^*(w)}
  \widetilde{A}(w)
= \left\{\begin{array}{l@{\quad\mbox{if}\quad}l}
	\pi r^2\left(1-w^2\right) & 0\le w\le 1\,,\\[0.1cm]
	\pi r^2\left(w^2-1\right) & 1<w<\infty\,.
  \end{array}\right.
\eeqn

\begin{thm}
The $k$-th moment, $k = 1,2,\ldots$, of the random area $A_\ka \equiv A_R$, $\ka = R/r$, of an ellipse generated by a random point $P \in D_R$ ($P$ uniformly distributed over the area of the disk $D_R$) is given by
\beq
  \EAk
= \left\{
  \begin{array}{l@{\quad\mbox{if}\quad}l}
	\pi^k r^{2k} & \ka = 0\,,\\[0.2cm]
	\dfrac{\pi^k r^{2k}}{k+1}\,\dfrac{1-(1-\ka^2)^{k+1}}{\ka^2} &
		0<\ka\le 1\,,\\[0.3cm]
	\dfrac{\pi^k r^{2k}}{k+1}\,\dfrac{1+(\ka^2-1)^{k+1}}{\ka^2} & 
		1 < \ka < \infty\,.	
  \end{array}
  \right.
\eeq
\end{thm}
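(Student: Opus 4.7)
The plan is to evaluate the integral in \eqref{Eq:ExpectedArea01} using the piecewise description of $A(\rh)$ in \eqref{Eq:A(rh)}, splitting the three cases according to whether $R$ is zero, $\le r$, or $>r$.

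The case $\ka=0$ is immediate: the disk $D_R$ collapses to $\{\Om\}$, so $P=\Om$ deterministically, and the corresponding ellipse is the circle of radius $r$ with area $\pi r^2$; all moments equal $\pi^k r^{2k}$. (Alternatively it follows from the $0<\ka\le 1$ formula by taking the limit $\ka\to 0^+$, since $(1-(1-\ka^2)^{k+1})/\ka^2 \to k+1$.)

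For $0<\ka\le 1$ the interval of integration $0\le \rh\le R$ lies entirely in the first branch, so $A(\rh)=\pi(r^2-\rh^2)$ throughout. Substituting $u=r^2-\rh^2$ (hence $\dd u=-2\rh\,\dd\rh$) in \eqref{Eq:ExpectedArea01} turns the $\rh$-integral into $\tfrac12\int_{r^2-R^2}^{r^2}u^k\,\dd u = \bigl(r^{2(k+1)}-(r^2-R^2)^{k+1}\bigr)/(2(k+1))$. Multiplying by $2\pi^k/R^2$ and pulling out $r^{2k}$ with $R^2=\ka^2 r^2$ yields the stated expression.

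For $1<\ka<\infty$ the integration interval crosses the transition at $\rh=r$, so I split $\int_0^R = \int_0^r + \int_r^R$. The first piece is the one just computed with $R=r$, giving $r^{2(k+1)}/(2(k+1))$. The second piece uses the substitution $v=\rh^2-r^2$ and $A(\rh)=\pi(\rh^2-r^2)$, producing $(R^2-r^2)^{k+1}/(2(k+1))$. Summing, multiplying by $2\pi^k/R^2$, and again using $R=\ka r$ gives the third branch of the formula.

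The argument is essentially two elementary substitutions together with a case split; there is no real obstacle, only the bookkeeping needed to keep the piecewise integrand correct across $\rh=r$ when $\ka>1$. A small consistency check is that both branches agree at $\ka=1$: each reduces to $\pi^k r^{2k}/(k+1)$, matching the fact that at $\ka=1$ the outer branch contributes nothing.
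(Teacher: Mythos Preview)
Your proof is correct and follows essentially the same approach as the paper: evaluate \eqref{Eq:ExpectedArea01} via the substitution that turns $(r^2-\rh^2)^k\rh$ (resp.\ $(\rh^2-r^2)^k\rh$) into a pure power, splitting the integral at $\rh=r$ when $\ka>1$. The only cosmetic differences are that the paper first nondimensionalizes with $w=\rh/r$ before substituting $y=1-w^2$, and handles $\ka=0$ via L'H\^opital on the $0<\ka\le 1$ formula rather than by your direct geometric observation.
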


\begin{proof}
First, we consider the case $0 < \ka \le 1$ ($0 < R \le r$). Eq. \eqref{Eq:ExpectedArea01} becomes
\beq
  \EARk
= \frac{2\pi^k}{R^2}\int_0^R\left(r^2-\rh^2\right)^k\rh\:\dd\rh\,.
\eeq
The substitution $\rh = rw$ gives
\beq
  \EARk
= \frac{2\pi^k r^{2k+2}}{R^2}\int_{w=0}^{R/r}\left(1-w^2\right)^k 
	w\:\dd w\,,
\eeq
which, with $A_\ka \equiv A_R$ may also be written as
\beq
  \EAk
= \frac{2\pi^k r^{2k}}{\ka^2}\int_{w=0}^{\ka}\left(1-w^2\right)^k 
	w\:\dd w\,.
\eeq
The substitution
\beq
  y = 1 - w^2\,,\quad \dd y = -2w\,\dd w\,,\quad
  \dd w = -\frac{\dd y}{2w}
\eeq
yields
\begin{align*}
  \EAk
= {} & -\frac{\pi^k r^{2k}}{\ka^2}\int_{y=1}^{1-\ka^2} y^k\:\dd y
= \frac{\pi^k r^{2k}}{\ka^2}\int_{1-\ka^2}^1 y^k\:\dd y
= \frac{\pi^k r^{2k}}{\ka^2(k+1)}\:y^{k+1}\,\bigg|_{1-\ka^2}^1\\
= {} & \frac{\pi^k r^{2k}}{k+1}\:\frac{1-\left(1-\ka^2\right)^{k+1}}
	{\ka^2}\,.
\end{align*}
Applying L'H\^opital's rule we get
\beq
  \lim_{\ka\rightarrow 0}\frac{1-\left(1-\ka^2\right)^{k+1}}{\ka^2}
= \lim_{\ka\rightarrow 0}\frac{(k+1)\left(1-\ka^2\right)^k 2\ka}{2\ka}
= (k+1)\lim_{\ka\rightarrow 0}\left(1-\ka^2\right)^k
= k+1\,,
\eeq
hence
\beq
  \E\big[A_0^k\big]
= \pi^k r^{2 k}\,.
\eeq
Now we consider the case $1 < \ka < \infty$ ($r < R < \infty$). Here we have
\begin{align*}
  \EARk
= {} & \frac{2}{R^2}\int_0^R A^k(\rh)\,\rh\,\dd\rh
= \frac{2}{R^2}\left(\int_0^r A^k(\rh)\,\rh\,\dd\rh
+ \int_r^R A^k(\rh)\,\rh\,\dd\rh\right)\\
= {} & \frac{2}{R^2}\left(\int_0^r \pi^k\left(r^2-\rh^2\right)^k\rh\:
	\dd\rh + \int_r^R \pi^k\left(\rh^2-r^2\right)^k\rh\:\dd\rh\right)\\
= {} & \frac{2\pi^k r^{2k+2}}{R^2}
	\left(\int_0^1 \left(1-w^2\right)^k w\:\dd w 
+ \int_1^{R/r} \left(w^2-1\right)^k w\:\dd w\right),
\end{align*}
hence
\begin{align*}
  \EAk
= {} & \frac{2\pi^k r^{2k}}{\ka^2}\left(\frac{1}{2(k+1)} 
+ \int_1^\ka \left(w^2-1\right)^k w\:\dd w\right)
= \frac{2\pi^k r^{2k}}{\ka^2}\left(\frac{1}{2(k+1)} 
+ \frac{\left(\ka^2-1\right)^{k+1}}{2(k+1)}\right)\\
= {} & \frac{\pi^k r^{2k}}{k+1}\,\frac{1+(\ka^2-1)^{k+1}}{\ka^2}\,.
  \qedhere
\end{align*}

\end{proof}

The graph of $\E[A_\ka]/r^2$ is shown in Fig.\ \ref{Fig:ExpectedArea01}. For $\ka > 1$ we have
\beq
  \frac{\dd}{\dd\ka}\,\E[A_\ka]
= \frac{\pi r^2\left(\ka^4-2\right)}{\ka^3}\,.
\eeq
It follows that the expectation $\E[A_\ka]$ has its global minimum at point $\ka = \sqrt[4]{2} \approx 1.18921$ with value
\beq
  \E\left[A_{\sqrt[4]{2}}\right]
= \big(\sqrt{2}-1\big)\pi r^2
\approx 1.30129\,r^2\,.
\eeq
Furthermore one finds that 
\beq
  \E[A_1]
= \E\left[A_{\sqrt{2}}\right]
= \frac{1}{2}\,\pi r^2
= \frac{1}{2}\,\widetilde{A}(0)
= \frac{1}{2}\,\widetilde{A}\big(\sqrt{2}\,\big)
= \frac{1}{2}\,\E[A_0]\,.
\eeq
For the variance of $A_\ka$, $\Var[A_\ka]
= \E[A_\ka^2] - \E[A_\ka]^2$, we get
\beq
  \Var[A_\ka]
= \frac{\pi^2 r^4}{3}\,\frac{1-\left(1-\ka^2\right)^3}{\ka^2}
- \frac{\pi^2 r^4}{4}\,\frac{\left[1-\left(1-\ka^2\right)^2\right]^2}{\ka^4}
= \frac{\pi^2 r^4\ka^4}{12}
\eeq
if $0\le\ka\le 1$, and
\beq
  \Var[A_\ka]
= \frac{\pi^2 r^4}{3}\,\frac{1+\left(\ka^2-1\right)^3}{\ka^2}
- \frac{\pi^2 r^4}{4}\,\frac{\left[1+\left(\ka^2-1\right)^2\right]^2}{\ka^4}
= \pi^2 r^4 \left[-1 - \frac{1}{\ka^4} + \frac{2}{\ka^2} + \frac{\ka^4}{12}\right]
\eeq
if $1 < \ka < \infty$. One finds that $\Var[A_\ka]$ has local extrema at
\beq
  \ka_1 \approx 1.06840 \quad\mbox{and}\quad
  \ka_2 \approx 1.30621
\eeq
with values
\beq
  \Var[\ka_1] \approx 0.920036 \quad\mbox{and}\quad
  \Var[\ka_2] \approx 0.703487\,,
\eeq
respectively (see Fig. \ref{Fig:VarianceArea01}). 

\begin{figure}[H]
\begin{minipage}[c]{0.5\textwidth}
  \centering
	\includegraphics[scale=0.9]{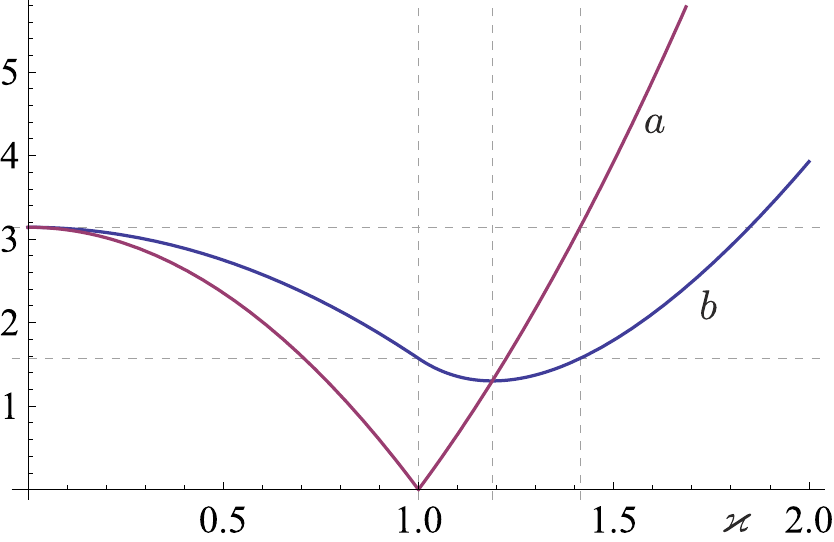}
	\vspace{-0.2cm}
	\caption{\label{Fig:ExpectedArea01} a) $\widetilde{A}(\ka)/r^2$ (see \eqref{Eq:A^*(w)}), b) $\E[A_\ka]/r^2$} 
\end{minipage}
\hfill
\begin{minipage}[c]{0.5\textwidth}
  \centering
	\includegraphics[scale=0.92]{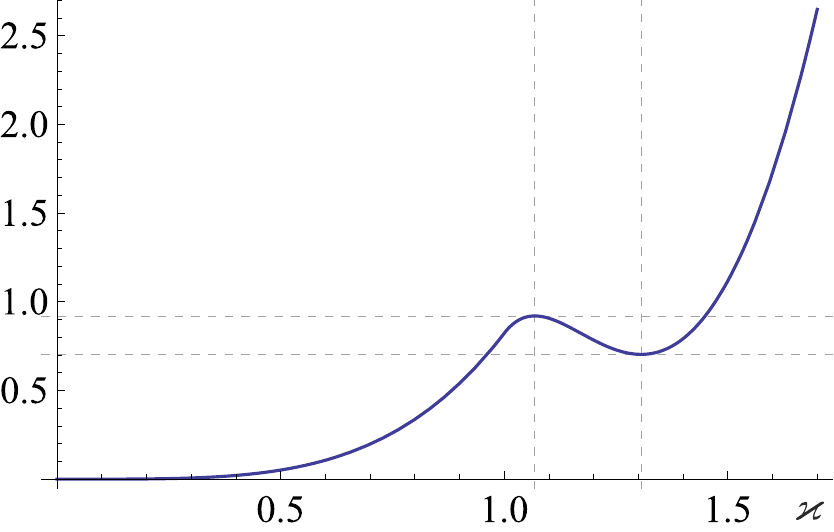}
	\vspace{-0.2cm}
	\caption{\label{Fig:VarianceArea01} $\Var[A_\ka]/r^4$} 
\end{minipage}
\end{figure}


\section{Distribution of the area}

Now we determine the distribution function
\beq
  F_\ka(x) = \W[A_\ka \le x]
\eeq
of the random variable $A_\ka$. We have to distinguish the following three cases.

\begin{itemize}[leftmargin=0.6cm] \setlength{\itemsep}{-1pt}

\item[1)] \underline{$0\le R\le r$ ($0\le\ka\le 1$):} The smallest area of an ellipse is equal to $\pi(r^2-R^2)$ and the biggest area equal to $\pi r^2$. We have $A_R > x$ if $P$ lies in an open disk with area $\pi\rh^2$.
From the first equation in \eqref{Eq:A(rh)}, with $x = A$ we get
\beq
  \rh^2 = r^2\,\bigg(1-\frac{x}{\pi r^2}\bigg)\,.
\eeq
It follows that
\beq
  \W[A_R > x]
= \frac{\pi\rh^2}{\pi R^2}
= \frac{r^2}{R^2}\,\bigg(1-\frac{x}{\pi r^2}\bigg)
\eeq
and
\beq
  \W[A_R \le x]
= 1 - \W[A_R > x]
= 1 - \frac{r^2}{R^2}\,\bigg(1 - \frac{x}{\pi r^2}\bigg)
\eeq
So we have
\beq
  F_\ka(x)
= \left\{\begin{array}{l@{\quad\mbox{if}\quad}l}
	0  &  -\infty < x < \pi r^2\left(1-\ka^2\right),\\[0.1cm]
	1 - \dfrac{1}{\ka^2}\,\bigg(1 - \dfrac{x}{\pi r^2}\bigg) & 
		\pi r^2\left(1-\ka^2\right) \le x < \pi r^2,\\[0.3cm]
	1 & \pi r^2 \le x < \infty\,.
  \end{array}\right.
\eeq

\item[2)] \underline{$r\le R\le\sqrt{2}\,r$ ($1\le\ka\le\sqrt{2}$):} For $0\le A_R\le\pi(R^2-r^2)$ we have $A_R > x$ if $P$ lies
\begin{itemize}[leftmargin=0.6cm] \setlength{\itemsep}{-1pt}
\item[a)] in an open disk with area $\pi\rh^2$ or
\item[b)] in an open annulus with area $\pi(R^2-\rh'^2)$, where $\rh'$ is the $\rh$ in the second equation in \eqref{Eq:A(rh)}.
\end{itemize}
It follows that
\begin{align*}
  \W[A_R > x]
= {} & \frac{\pi\rh^2 + \pi(R^2-\rh'^2)}{\pi R^2}
= 1 + \frac{\rh^2-\rh'^2}{R^2}
= 1 + \frac{r^2}{R^2}\,\bigg[1-\frac{x}{\pi r^2} 
- \bigg(1+\frac{x}{\pi r^2}\bigg)\bigg]\\
= {} &  1 - \frac{2x}{\pi R^2}\,, 
\end{align*}
hence
\beq
  \W[A_R \le x]
= 1 - \W[A_R > x]
= \frac{2x}{\pi R^2}\,.
\eeq 
For $\pi(R^2-r^2)\le A_R\le\pi r^2$ we have $A_R > x$ if $P$ lies in an open disk with area $\pi\rh^2$. Therefore, the distribution function is given by
\beq
  F_\ka(x)
= \left\{\begin{array}{l@{\quad\mbox{if}\quad}l}
	0  &  -\infty < x < 0\,,\\[0.2cm]
	\dfrac{2x}{\pi r^2 \ka^2} & 0 \le x < \pi r^2(\ka^2-1)\,,\\[0.3cm]
	1 - \dfrac{1}{\ka^2}\,\bigg(1 - \dfrac{x}{\pi r^2}\bigg) & 
		\pi r^2\left(\ka^2 - 1\right) \le x < \pi r^2,\\[0.4cm]
	1 & \pi r^2 \le x < \infty\,.
  \end{array}\right.
\eeq

\item[3)] \underline{$\sqrt{2}\,r\le R <\infty$ ($\sqrt{2}\le\ka <\infty$):} One easily finds
\beq
  F_\ka(x)
= \left\{\begin{array}{l@{\quad\mbox{if}\quad}l}
	0  &  -\infty < x < 0\,,\\[0.2cm]
	\dfrac{2x}{\pi r^2 \ka^2} & 0 \le x < \pi r^2\,,\\[0.3cm]
	\dfrac{1}{\ka^2}\,\bigg(1 + \dfrac{x}{\pi r^2}\bigg) & 
		\pi r^2 \le x < \pi r^2\left(\ka^2 - 1\right),\\[0.4cm]
	1 & \pi r^2\left(\ka^2 - 1\right) \le x < \infty\,.
  \end{array}\right.
\eeq

\end{itemize}

\noindent
Now we determine the moments of the random variabe $A_\ka$ in an alternative way:

\begin{itemize}[leftmargin=0.6cm] \setlength{\itemsep}{-1pt}

\item[1)] \underline{$0\le\ka\le 1$:} The density function is given by $f_\ka(x) = 1/(\pi r^2\ka^2)$ if $\pi r^2\left(1-\ka^2\right) \le x < \pi r^2$, and $f_\ka(x) = 0$ if the area is outside this interval. So we have
\begin{align*}
  \EAk
= {} & \int_{\pi r^2(1-\ka^2)}^{\pi r^2} x^k f_\ka(x)\:\dd x
= \frac{1}{\pi r^2\ka^2}\int_{\pi r^2(1-\ka^2)}^{\pi r^2} x^k\,
	\dd x
= \frac{\pi^k r^{2k}}{k+1}\,\frac{1 - (1-\ka^2)^{k+1}}{\ka^2}\,.
\end{align*}

\item[2)] \underline{$1\le\ka\le\sqrt{2}$\,:} The resctriction of the density function $f_\ka(x)$ to the interval $0\le x\le\pi r^2$ is given by
\beq
  f_\ka(x) = \frac{2}{\pi r^2 \ka^2} \;\;\mbox{if}\;\; 0\le x<\pi r^2(\ka^2-1)\,,
  \quad\mbox{and}\quad
  f_\ka(x) = \frac{1}{\pi r^2 \ka^2} \;\;\mbox{if}\;\; \pi r^2(\ka^2-1)\le x\le\pi r^2\,.
\eeq
It follows that
\begin{align*}
  \EAk
= {} & \frac{2}{\pi r^2 \ka^2}\int_0^{\pi r^2(\ka^2-1)} x^k\,\dd x
+ \frac{1}{\pi r^2 \ka^2}\int_{\pi r^2(\ka^2-1)}^{\pi r^2} x^k\,\dd x\\[0.1cm]
= {} & \frac{2\pi^k r^{2k}\left(\ka^2-1\right)^{k+1}}{(k+1)\ka^2}
+ \frac{\pi^k r^{2k}\left[1-\left(\ka^2-1\right)^{k+1}\right]}{(k+1)\ka^2} 
= \frac{\pi^k r^{2k}}{k+1}\,\frac{1+\left(\ka^2-1\right)^{k+1}}{\ka^2}\,.
\end{align*}

\item[3)] \underline{$\sqrt{2}\le\ka<\infty$:} One finds
\begin{align*}
  \EAk
= {} & \frac{2}{\pi r^2\ka^2}\int_0^{\pi r^2}x^k\,\dd x
+ \frac{1}{\pi r^2\ka^2}\int_{\pi r^2}^{\pi r^2(\ka^2-1)}x^k\,
	\dd x\\[0.1cm]
= {} & \frac{2\pi^k r^{2k}}{(k+1)\ka^2}
+ \frac{\pi^k r^{2k}\left[\left(\ka^2-1\right)^{k+1} - 1\right]}
	{(k+1)\ka^2}
= \frac{\pi^k r^{2k}}{k+1}\,\frac{1+\left(\ka^2-1\right)^{k+1}}{\ka^2}\,.
\end{align*}

\end{itemize}

\section{Moments of the perimeter}

By analogy to the determination of $\EARk$, we get the moments of the perimeter with 
\begin{align*}
  \EURk
= {} & \left(\int_{P \in D_R}u^k\,\dd P\right)\bigg/
  \left(\int_{P \in D_R}\dd P\right)
= \frac{1}{\pi R^2}\int_{\alpha=0}^{2\pi}\,\int_{\rh=0}^R
  u^k(\rh)\,\rh\,\dd\rh\,\dd\alpha\\
= {} & \frac{2}{R^2}\int_{\rh=0}^R u^k(\rh)\,\rh\,\dd\rh\,,
\end{align*}
where $u(\rh)$ is the perimeter of an ellipse generated by a point $P \in D_R$ with distance $\rh$ from $\Om$. The length of the semi-major axis is given by $r+\rh$, and the length of the semi-minor axis by $r-\rh$ or $\rh-r$. In both cases we have 
\beqn \label{Eq:u(rh)}
  u(\rh)
= 4(r+\rh)\:E\left(\frac{2\,\sqrt{\mathstrut r\rh}}{r+\rh}\right),
\eeqn
where
\beq
  E(k)
= \int_0^{\pi/2}\sqrt{1-k^2\sin^2\theta}\;\,\dd\theta
\eeq
is the complete elliptic integral of the second kind with modulus $k$, $0\le k\le 1$. So we have
\beqn \label{Eq:E[URk]}
  \EURk
= \frac{2^{2k+1}}{R^2}\bigintsss_{\,0}^R \rh\,(r+\rh)^k\,E^k\!\left(\frac{2\,\sqrt{\mathstrut r\rh}}{r+\rh}\right)\dd\rh\,.
\eeqn
By substituting $w = \rh/r$, the perimeter \eqref{Eq:u(rh)} may be written as
\beqn \label{Eq:u(w)}
  \tilde{u}(w) = rh(w)\,, \quad\mbox{where}\quad
  h(w) := 4(w+1)\:E\left(\frac{2\,\sqrt{\mathstrut w}}{w+1}\right),
\eeqn
and, with $U_\ka \equiv U_R$, $\ka = R/r$, the integral \eqref{Eq:E[URk]} becomes
\beqn \label{Eq:EUk_a}
  \EUk
= \frac{2^{2k+1}r^k}{\ka^2}\bigintsss_{\,0}^\ka w\,(w+1)^k\,E^k\!\left(\frac{2\,\sqrt{\mathstrut w}}{w+1}\right)\dd w\,.
\eeqn

\begin{thm} \label{Thm:Perimeter}
The expectation of the random perimeter $U_\ka = U_R$, $\ka = R/r$, of an ellipse generated by a random point $P \in D_R$ ($P$ uniformly distributed over the area of the disk $D_R$) is given by
\beq
  \E[U_\ka]
= \left\{
  \begin{array}{l@{\quad\mbox{if}\quad}l}
	2\pi r & \ka = 0\,,\\[0.1cm]
	\dfrac{8r}{9\ka^2}\left[\left(7\ka^2+1\right)E(\ka)
	+ \left(3\ka^4-2\ka^2-1\right)K(\ka)\right] & 0<\ka<1\,,\\[0.3cm]
	64r/9 & \ka=1\,,\\[0.15cm]
	\dfrac{8r}{9\ka}\left[\left(7\ka^2+1\right)E\!\left(\ka^{-1}\right)
	- 4\left(\ka^2-1\right)K\!\left(\ka^{-1}\right)\right] & \ka>1	
  \end{array}
  \right.
\eeq
where
\beq
  K(k)
= \int\limits_0^{\pi/2}\frac{\dd\theta}{\sqrt{1-k^2\sin^2\theta}}
\eeq
is the complete elliptic integral of the first kind with modulus $k$, $0\le k\le 1$. The expectations for the cases $0<\ka<1$ and $\ka>1$ can be subsumed in the formula
\beq
  \E[U_\ka]
= \frac{4(\ka+1)r}{9\ka^2}\left[\left(7\ka^2+1\right)E\left(\frac{2\,\sqrt{\mathstrut\ka}}{\ka+1}\right)
- (\ka-1)^2\,K\left(\frac{2\,\sqrt{\mathstrut\ka}}{\ka+1}\right)\right].
\eeq
\end{thm}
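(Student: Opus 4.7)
The plan is to specialize formula \eqref{Eq:EUk_a} to $k=1$, so that the claim reduces to evaluating
\beq
  J(\ka):=\int_0^\ka w(w+1)\,E\!\left(\frac{2\sqrt{\mathstrut w}}{w+1}\right)\dd w,\qquad \E[U_\ka]=\frac{8r}{\ka^2}\,J(\ka).
\eeq
The boundary value $\ka=0$ is immediate from \eqref{Eq:u(rh)}, since $u(0)=4r\,E(0)=2\pi r$. The remaining difficulty is that the modulus $2\sqrt{\mathstrut w}/(w+1)$ depends on $w$ through a Landen-type expression, so my first step would be to rewrite $E$ at this argument as a combination of $E$ and $K$ whose modulus is $w$ itself (or $1/w$); afterwards $J(\ka)$ can be integrated in closed form.

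For $0<\ka\le 1$ the ascending Landen transformation
\beq
  E\!\left(\frac{2\sqrt{\mathstrut w}}{1+w}\right)=\frac{2E(w)-(1-w^2)K(w)}{1+w}
\eeq
reduces the integrand to $2w\,E(w)-w(1-w^2)K(w)$. Using the differential equations $E'(k)=(E-K)/k$ and $K'(k)=[E-(1-k^2)K]/[k(1-k^2)]$, the ansatz $A(k)E(k)+B(k)K(k)$ with $A,B$ polynomial in $k^2$ produces, after matching coefficients of $E$ and $K$, the antiderivatives
\begin{align*}
  \int k\,E(k)\,\dd k
  & =\tfrac{1}{3}\bigl[(1+k^2)E(k)-(1-k^2)K(k)\bigr],\\
  \int k(1-k^2)\,K(k)\,\dd k
  & =\tfrac{1}{9}\bigl[(5-k^2)E(k)-(5-8k^2+3k^4)K(k)\bigr].
\end{align*}
Endpoint contributions at $0$ cancel because $E(0)=K(0)$, and substitution into $J(\ka)$ reproduces the first-case formula.

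For $\ka>1$ I would exploit $2\sqrt{\mathstrut w}/(w+1)=2\sqrt{\mathstrut 1/w}/(1+1/w)$, so that the same Landen relation, applied with $1/w$ as the small parameter, gives for $w\ge 1$
\beq
 E\!\left(\frac{2\sqrt{\mathstrut w}}{w+1}\right)=\frac{w}{w+1}\bigl[2E(1/w)-(1-1/w^2)K(1/w)\bigr].
\eeq
Splitting $J(\ka)=J(1)+\int_1^\ka w(w+1)E(\cdots)\dd w$ and substituting $v=1/w$ in the second integral reduces it to $\int_{1/\ka}^{1} v^{-4}\bigl[2E(v)-(1-v^2)K(v)\bigr]\dd v$, whose antiderivative—found by the analogous ansatz $\bigl[A(v)E(v)+B(v)K(v)\bigr]/v^{3}$ with $A,B$ quadratic in $v^2$—equals $\bigl[-(7+v^2)E(v)+4(1-v^2)K(v)\bigr]/(9v^3)$. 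Combining this with $J(1)=8/9$ (which also yields $\E[U_1]=64r/9$ directly) produces the $\ka>1$ formula. The subsumed single-line expression is then verified by substituting the Landen identities for both $E$ and $K$ at argument $2\sqrt{\mathstrut \ka}/(\ka+1)$, using $K(2\sqrt{\mathstrut \ka}/(\ka+1))=(1+\ka)K(\ka)$ for $\ka\le 1$ and the reciprocal analogue for $\ka>1$, and checking the algebraic match case by case.

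The main obstacle is the antiderivative step: each of the three indefinite integrals requires the correct polynomial ansatz, and setting up and solving the corresponding small linear systems for the coefficients of $E$ and $K$ is where most of the work (and most of the opportunity for algebraic error) lies. Once these antiderivatives are in hand, the rest of the argument is a combination of bookkeeping and routine application of Landen's formula.
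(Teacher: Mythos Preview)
Your approach is correct and essentially the same as the paper's: both use the ascending Landen transformation to reduce the integrand, integrate the resulting combinations of $E$ and $K$, pass to $v=1/w$ for $\ka>1$, and verify the unified formula via the Landen identities for $E$ and $K$. The only difference is cosmetic---the paper quotes the needed antiderivatives from Gradstein--Ryshik (splitting $\int wK$ and $\int w^3K$, and $\int E/v^4$, $\int K/v^4$, $\int K/v^2$), whereas you combine terms and derive them by the polynomial ansatz using $E'=(E-K)/k$ and $K'=[E-(1-k^2)K]/[k(1-k^2)]$.
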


\begin{proof}
First, we consider the case $0 < \ka < 1$. Using the relation
\beqn \label{Eq:RelationE}
  E\left(\frac{2\,\sqrt{k}}{1+k}\right)
= \frac{1}{1+k}\left[2E(k)-\left(1-k^2\right)K(k)\right]
\eeqn
(\cite[Vol.\ 2, p.\ 299]{Gradstein_Ryshik}, Eq. (8.126.4)), \eqref{Eq:EUk_a} becomes
\beq
  \E[U_\ka]
= \frac{8r}{\ka^2}\left[2\int_0^\ka wE(w)\,\dd w
- \int_0^\ka wK(w)\,\dd w
+ \int_0^\ka w^3 K(w)\,\dd w\right].
\eeq
With
\begin{gather*}
  \int E(k)\,k\,\dd k
= \frac{1}{3}\left[\left(1+k^2\right)E(k)-(1-k^2)K(k)\right],\\
  \int K(k)\,k\,\dd k
= E(k)-(1-k^2)K(k)\,,\\
 \int K(k)\,k^3\,\dd k
= \frac{1}{9}\left[\left(4+k^2\right)E(k)-\left(1-k^2\right)
  \left(4+3k^2\right)K(k)\right]
\end{gather*}
(Equations (5.112.4), (5.112.3), (5.112.5) in \cite[Vol.\ 2, p.\ 13]{Gradstein_Ryshik}) we get
\beqn \label{Eq:EUR}
  \E[U_\ka]
= \frac{8r}{9\ka^2}\left[\left(7\ka^2+1\right)E(\ka)
	+ \left(3\ka^4-2\ka^2-1\right)K(\ka)\right],\quad
  0 < \ka < 1\,.
\eeqn
For $\ka=0$ we have $E(\ka)=\pi/2=K(\ka)$, hence
\beq
  \left(7\ka^2+1\right)E(\ka) + \left(3\ka^4-2\ka^2-1\right)K(\ka)
= 0\,.
\eeq
So $\E[U_\ka]$ has the indeterminate form $0/0$. Taking
\beq
  \frac{\dd E(k)}{\dd k}
= \frac{E(k)-K(k)}{k}
  \quad\mbox{and}\quad
  \frac{\dd K(k)}{\dd k}
= \frac{E(k)-(1-k^2)K(k)}{k(1-k^2)}
\eeq
(Equations (8.123.4), (8.123.2) in \cite[Vol.\ 2, p.\ 298]{Gradstein_Ryshik}) into account, applying L'H\^opital's rule twice gives
\beq
  \lim_{\ka\rightarrow 0}\E[U_\ka]
= 4r \lim_{\ka\rightarrow 0}\left[3E(\ka) 
+ 2\left(\ka^2-1\right)K(\ka)\right]
= 4r\left[3\,\frac{\pi}{2}-2\,\frac{\pi}{2}\right]
= 2\pi r\,.
\eeq
Now we consider the limit of \eqref{Eq:EUR} for $\ka\rightarrow 1$ (and hence $R\rightarrow r$). We have $E(1)=1$, $K(1)=\infty$. For $\ka=1$,
\beq
  \left(3\ka^4-2\ka^2-1\right)K(\ka)
\eeq
has the indeterminate form $0\cdot\infty$. {\em Mathematica} finds
\beq
  \lim_{\ka\rightarrow 1-}(3\ka^4-2\ka^2-1)K(\ka)
= 0\,.
\eeq
It follows that 
\beq
  \lim_{\ka\rightarrow 1-}\E[U_\ka]
= \frac{8r}{9\cdot 1}\left[(7+1)+0\right]
= \frac{64r}{9}\,.
\eeq
Now we consider the case $\ka = R/r > 1$. Here we have
\begin{align*}
  \E[U_\ka]
= {} & \frac{64r}{9\ka^2} + \frac{8}{R^2}\bigintsss_{\,\rh=r}^R
  \rh\,(r+\rh)\:
  E\left(\frac{2\,\sqrt{\mathstrut r\rh}}{r+\rh}\right)\dd\rh
  \displaybreak[0]\\[0.1cm]
= {} & \frac{64r}{9\ka^2} + \frac{8}{R^2}\bigintsss_{\,\rh=r}^R \rh^2
  \left(\frac{r}{\rh}+1\right)\,
  E\left(\frac{2\,\sqrt{\mathstrut r/\rh}}{(r/\rh)+1}\right)\dd\rh\,.
\end{align*}
The substitution
\beq
  v = \frac{r}{\rh}\,,\quad
  \dd v = -\frac{r}{\rh^2}\,\dd\rh\,,\quad
  \dd\rh = -\frac{r}{v^2}\,\dd v\,,\quad
  \rh = r \:\Longrightarrow\: v = 1\,,\quad
  \rh = R \:\Longrightarrow\: v = r/R = 1/\ka 
\eeq
gives
\beq
  \E[U_\ka]
= \frac{64r}{9\ka^2} + \frac{8r}{\ka^2}\bigintsss_{\,1/\ka}^1
  \frac{v+1}{v^4}\:E\left(\frac{2\,\sqrt{v}}{1+v}\right)\dd v\,.
\eeq
Applying \eqref{Eq:RelationE}, we get
\beq
  \E[U_\ka]
= \frac{64r}{9\ka^2} + \frac{8r}{\ka^2}\left[2\int_{1/\ka}^1
  \frac{E(v)}{v^4}\:\dd v - \int_{1/\ka}^1 \frac{K(v)}{v^4}\:\dd v
  + \int_{1/\ka}^1 \frac{K(v)}{v^2}\:\dd v 
  \right]  
\eeq
From \cite[Vol.\ 2, pp.\ 13-14]{Gradstein_Ryshik}, Equations (5.112.12), (5.112.9), we know that
\begin{gather*}
  \int\frac{E(k)}{k^4}\:\dd k = \frac{1}{9k^3}
  \left[2\left(k^2-2\right)E(k) + \left(1-k^2\right)K(k)\right],\\
  \int\frac{K(k)}{k^2}\:\dd k = -\frac{E(k)}{k}\,.
\end{gather*} 
{\em Mathematica} finds
\beq
  \int_{1/\ka}^1\frac{K(v)}{v^4}\:\dd v = \frac{1}{9}\left[
  -5 + \ka\left(\ka^2+4\right)E\!\left(\ka^{-1}\right)
  + 2\ka\left(\ka^2-1\right)K\!\left(\ka^{-1}\right)
  \right].
\eeq 
So we get
\beq
  \E[U_\ka]
= \frac{8r}{9\ka}\left[\left(7\ka^2+1\right)E\!\left(\ka^{-1}\right)
	- 4\left(\ka^2-1\right)K\!\left(\ka^{-1}\right)\right],\quad
  \ka > 1\,.
\eeq
For $\ka=1$,
\beq
  \left(\ka^2-1\right)K\!\left(\ka^{-1}\right)
\eeq
has the indeterminate form $0\cdot\infty$. {\em Mathematica} finds
\beq
  \lim_{\ka\rightarrow 1+}\left(\ka^2-1\right)K\!\left(\ka^{-1}\right)
= 0\,.
\eeq
It follows that
\beq
  \lim_{\ka\rightarrow 1+}\E[U_\ka]
= \frac{8r}{9\cdot 1}\left[(7+1)-0\right]
= \frac{64r}{9}\,.
\eeq
Using the relations \eqref{Eq:RelationE} and 
\beq
  K\left(\frac{2\,\sqrt{k}}{1+k}\right)
= (1+k)K(k)
\eeq
\cite[Vol.\ 2, p.\ 299]{Gradstein_Ryshik}, Eq. (8.126.3), for $0 < \ka < 1$ we get
\begin{align*}
f(\ka)
:= {} & \frac{4(\ka+1)r}{9\ka^2}\left[\left(7\ka^2+1\right)
  E\left(\frac{2\,\sqrt{\mathstrut\ka}}{\ka+1}\right) - (\ka-1)^2\,
  K\left(\frac{2\,\sqrt{\mathstrut\ka}}{\ka+1}\right)\right]\\
= {} & \dfrac{8r}{9\ka^2}\left[\left(7\ka^2+1\right)E(\ka)
	+ \left(3\ka^4-2\ka^2-1\right)K(\ka)\right] = \E[U_\ka]\,.
\end{align*}
For $\ka > 1$ we have
\begin{align*}
  E\left(\frac{2\,\sqrt{\mathstrut\ka}}{\ka+1}\right)
= {} & E\left(\frac{2\,\sqrt{\mathstrut\ka^{-1}}}{1+\ka^{-1}}\right)
= \frac{1}{1+\ka^{-1}}\left[2E\!\left(\ka^{-1}\right)
- \left(1-\ka^{-2}\right)K\!\left(\ka^{-1}\right)\right]\displaybreak[0]\\
= {} & \frac{2\ka}{\ka+1}\,E\!\left(\ka^{-1}\right)
- \frac{\ka-1}{\ka}\,K\!\left(\ka^{-1}\right)
\end{align*}
and
\beq
  K\left(\frac{2\,\sqrt{\mathstrut\ka}}{\ka+1}\right)
= K\left(\frac{2\,\sqrt{\mathstrut\ka^{-1}}}{1+\ka^{-1}}\right)
= \frac{\ka+1}{\ka}\,K\!\left(\ka^{-1}\right).
\eeq
It follows that
\begin{align*}
f(\ka)
= {} & \frac{4r}{9\ka^2}\left[2\ka\left(7\ka^2+1\right)
  E\!\left(\ka^{-1}\right)
- 8\ka\left(\ka^2-1\right)K\!\left(\ka^{-1}\right)\right]\\[0.2cm]
= {} & \frac{8r}{9\ka}\left[\left(7\ka^2+1\right)E\!\left(\ka^{-1}\right)
- 4\left(\ka^2-1\right)K\!\left(\ka^{-1}\right)\right] = \E[U_\ka]\,.
\end{align*}
For $\ka = 0$,
\beq
  \left(7\ka^2+1\right)
  E\left(\frac{2\,\sqrt{\mathstrut\ka}}{\ka+1}\right) - (\ka-1)^2\,
  K\left(\frac{2\,\sqrt{\mathstrut\ka}}{\ka+1}\right) = 0\,,
\eeq
hence
$f(0)$ has the indeterminate form $0/0$. For $\ka = 1$,
\beq
  (\ka-1)^2\,K\left(\frac{2\,\sqrt{\mathstrut\ka}}{\ka+1}\right)
\eeq
has the indeterminate form $0\cdot\infty$. {\em Mathematica} finds
\beq
  \lim_{\ka\rightarrow 0}f(\ka) = 2\pi r
  \qquad\mbox{and}\qquad
  \lim_{\ka\rightarrow 1}f(\ka) = \frac{64r}{9}\,. \qedhere
\eeq

\end{proof}

The graph of $\E[U_\ka]/r$ is shown in Fig.\ \ref{Fig:ExpectedPerimeter01}. {\em Mathematica} finds the series expansion
\beq
  \E[U_\ka] = \pi r\left(\frac{4\ka}{3} + \frac{1}{\ka}
- \frac{1}{16\ka^3}\right) + O\left(\ka^{-5}\right)
\eeq
about the point $\ka = \infty$. For abbreviation we put
\beqn \label{Eq:h(ka)}
  s(\ka)
:= \pi r\left(\frac{4\ka}{3} + \frac{1}{\ka} - \frac{1}{16\ka^3}\right).
\eeqn  
$s(\ka)$ provides a very good approximation for $\E[U_\ka]$ even for relatively small values of $\ka$ (see Fig.~\ref{Fig:ExpectedPerimeter01}). One finds that
\begin{gather*}
  \E[U_1] - s(1) \approx -2.29222\cdot 10^{-2}\,r\,,\qquad
  \E[U_2] - s(2) \approx -5.44238\cdot 10^{-4}\,r\,,\\[0.1cm]
  \E[U_{10}] - s(10) \approx -1.64009\cdot 10^{-7}\,r\,. 
\end{gather*}

\begin{figure}[H]
  \centering
	\includegraphics[scale=1.1]{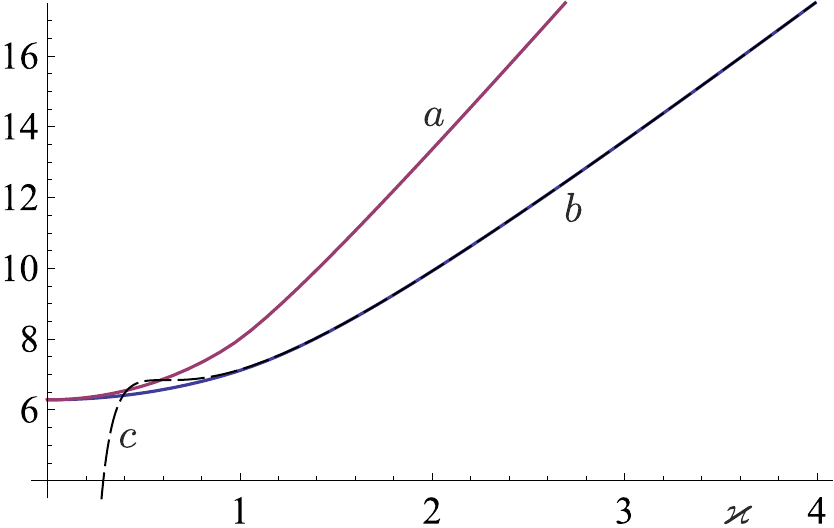}
	\vspace{-0.2cm}
	\caption{\label{Fig:ExpectedPerimeter01} a) $\tilde{u}(\ka)/r$ (see \eqref{Eq:u(w)}),\: b) $\E[U_\ka]/r$,\: c) $s(\ka)/r$} 
\end{figure}


\section{Distribution of the perimeter}

With \eqref{Eq:u(w)} the maximum perimeter in the disk of radius $R$ is given by
\beq
  \tilde{u}(\ka) = rh(\ka)
  \quad\mbox{with}\quad
  \ka = R/r\,.  
\eeq
Let $G_\ka(u) = \W[U_\ka \le u]$ be the distribution function of the random variable $U_\ka$. One easliy finds from geometrical considerations
\beqn \label{Eq:G}
  G_\ka(u)
= \left\{\begin{array}{l@{\quad\mbox{if}\quad}l}
	0 & -\infty < u < 2\pi r\,,\\[0.05cm]
	\dfrac{w^2(u)}{\ka^2} & 2\pi r \le u < rh(\ka)\,,\\[0.3cm]
	1 & rh(\ka) \le u < \infty\,, 
  \end{array}\right.
\eeqn
where $w = w(u)$ is the solution of
\beq
  rh(w) = u\,.
\eeq
The Figures \ref{Fig:Distribution_kappa=01}-\ref{Fig:Density_kappa=10} show examples for graphs of distribution functions $G_\ka$ and corresponding density functions $g_\ka$ (multiplied with $r$). The density functions are obtained by numerical differentiation of the distribution functions. For comparison the distribution function and the density function (multiplied with $r$) of the uniform distribution with support interval $2\pi r \le u \le rh(\ka)$ are shown (dashed lines).     

\begin{figure}[h]
\begin{minipage}[c]{0.5\textwidth}
  \centering
	\includegraphics[scale=0.9]{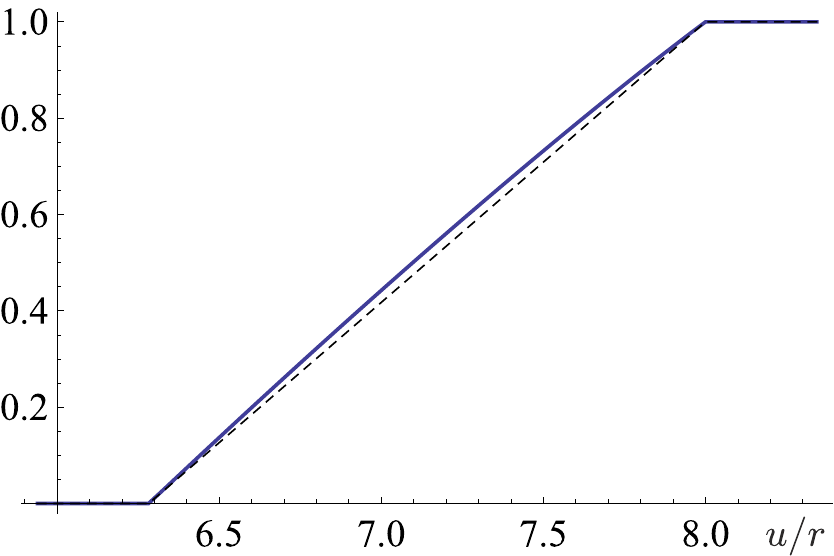}
	\vspace{-0.2cm}
	\caption{\label{Fig:Distribution_kappa=01} Distribution function $G_1(u)$} 
\end{minipage}
\hfill
\begin{minipage}[c]{0.5\textwidth}
  \vspace{0.125cm}
  \centering
	\includegraphics[scale=0.9]{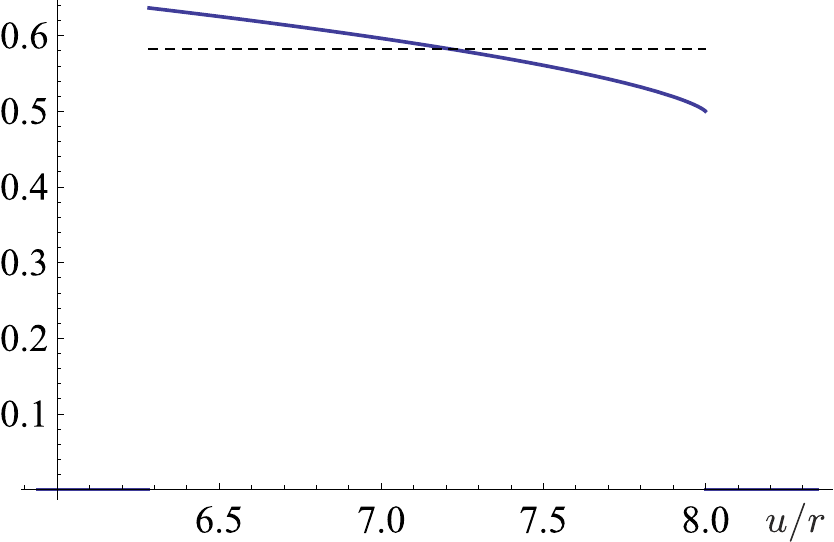}
	\vspace{-0.2cm}
	\caption{\label{Fig:Density_kappa=01} Density function $rg_1(u)$} 
\end{minipage}
\end{figure}

\begin{figure}[h]
\begin{minipage}[c]{0.5\textwidth}
  \centering
	\includegraphics[scale=0.9]{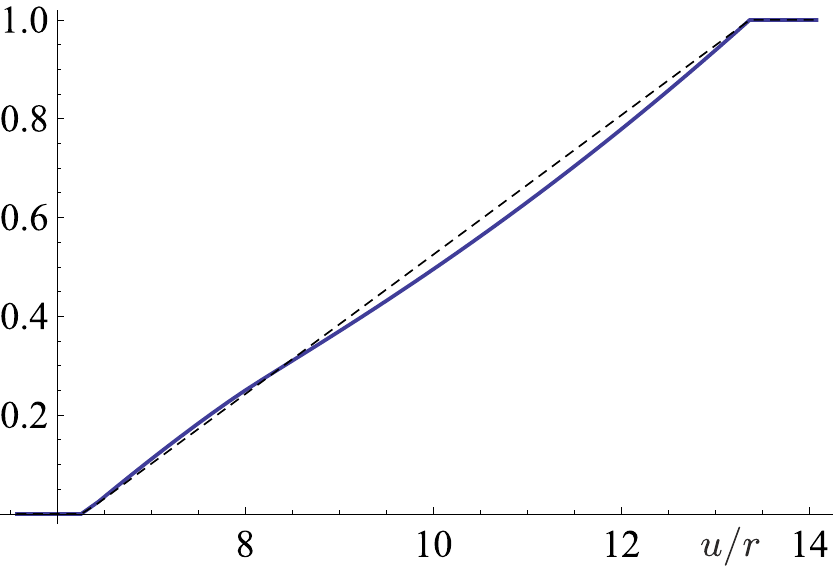}
	\vspace{-0.2cm}
	\caption{\label{Fig:Distribution_kappa=02} Distribution function $G_2(u)$} 
\end{minipage}
\hfill
\begin{minipage}[c]{0.5\textwidth}
  \vspace{0.2cm}
  \centering
	\includegraphics[scale=0.9]{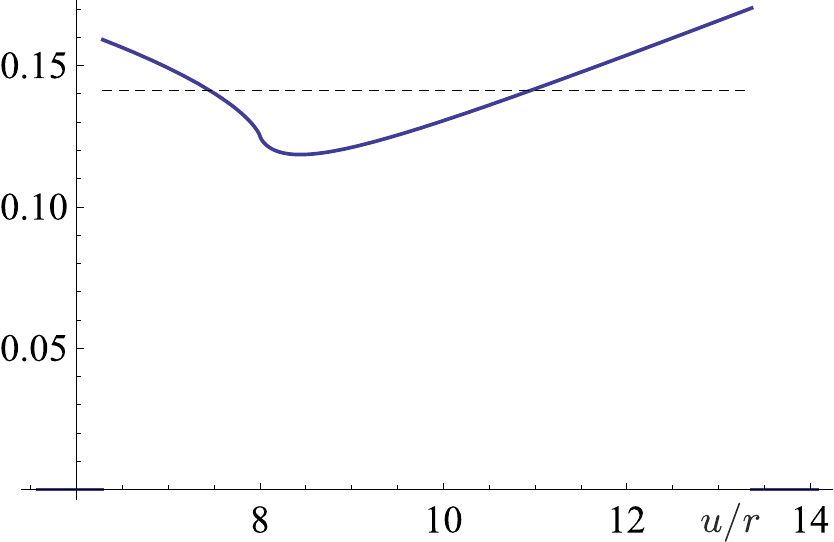}
	\vspace{-0.2cm}
	\caption{\label{Fig:Density_kappa=02} Density function $rg_2(u)$} 
\end{minipage}
\end{figure}

\begin{figure}[H]
\begin{minipage}[c]{0.5\textwidth}
  \centering
	\includegraphics[scale=0.9]{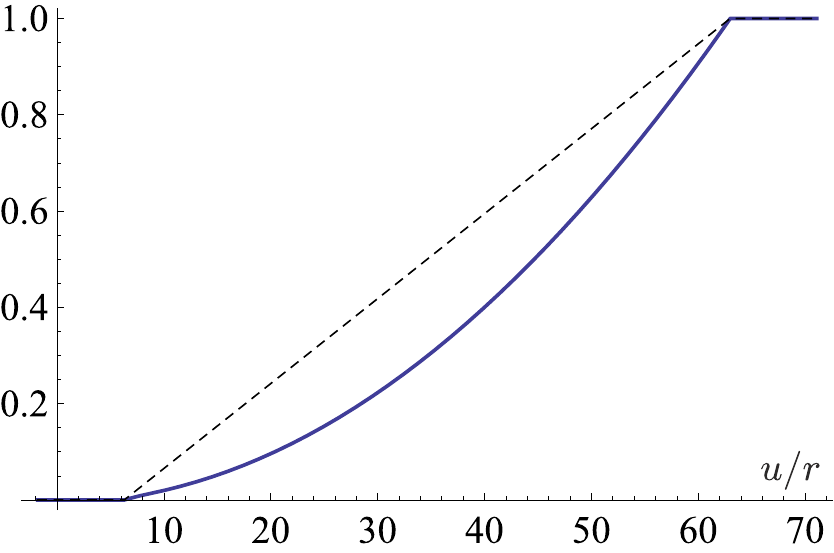}
	\vspace{-0.2cm}
	\caption{\label{Fig:Distribution_kappa=10} Distribution function $G_{10}(u)$} 
\end{minipage}
\hfill
\begin{minipage}[c]{0.5\textwidth}
  \vspace{0.32cm}
  \centering
	\includegraphics[scale=0.9]{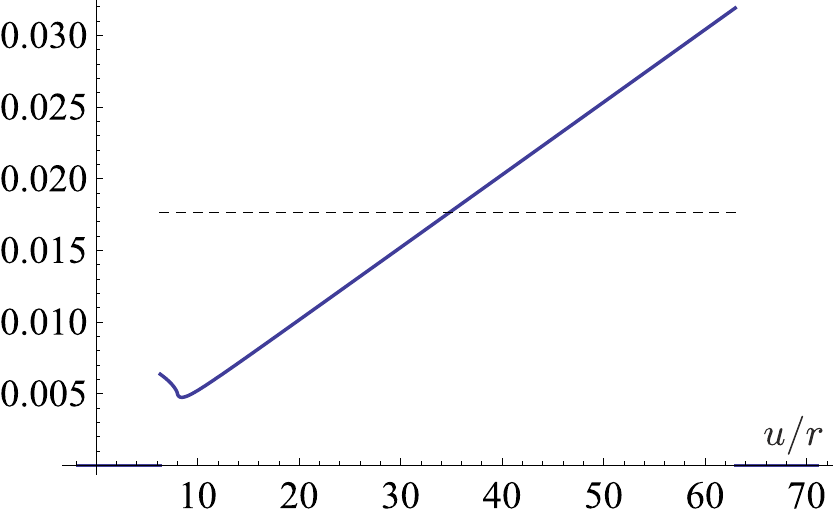}
	\vspace{-0.2cm}
	\caption{\label{Fig:Density_kappa=10} Density function $rg_{10}(u)$} 
\end{minipage}
\end{figure}

For the $k$-th moment of the perimeter $U_\ka$ we have the Stieltjes integral
\beq
  \EUk
= \int_{-\infty}^\infty u^k\,\dd G_\ka(u)
= \int_{2\pi r}^{rh(\ka)} u^k\,\dd G_\ka(u)\,.
\eeq
Integration by parts yields
\begin{align*}
  \EUk
= {} & u^k G_\ka(u)\,\Big|_{2\pi r}^{rh(\ka)} 
	- k\int_{2\pi r}^{rh(\ka)} u^{k-1}G_\ka(u)\,\dd u
= r^kh^k(\ka) - k\int_{2\pi r}^{rh(\ka)} u^{k-1}G_\ka(u)\,\dd u\,.
\end{align*}
From \eqref{Eq:G} it follows that
\beq
  \EUk
= r^kh^k(\ka) - \frac{k}{\ka^2}\int_{2\pi r}^{rh(\ka)} u^{k-1}w^2(u)\,
	\dd u\,.
\eeq
The substitution $u=rx$ gives
\beqn \label{Eq:EUk_b}
  \EUk
= r^k\left[h^k(\ka) - \frac{k}{\ka^2}\int_{2\pi}^{h(\ka)}x^{k-1}\widetilde{w}^2(x)\,\dd x\right],
\eeqn
where $\widetilde{w}=\widetilde{w}(x)$ is the solution of the equation
\beq
  rh(\widetilde{w}) = rx \quad\Longrightarrow\quad h(\widetilde{w}) = x
\eeq
for given value of $x\in[2\pi,h(\ka)]$.

Table \ref{Ta:Table} shows examples for numerical values of $\EUk/r^k$ which are obtained by numerical integration of \eqref{Eq:EUk_a} and \eqref{Eq:EUk_b} using {\em Mathematica}. The values for $k = 1$ also directly follow from Theorem \ref{Thm:Perimeter}.  

\begin{table}[H]
\begin{center}
\renewcommand{\arraystretch}{1.2}
\begin{tabular}{|r|l|l|} \hline
\rule{0pt}{12pt}
$k$ & \multicolumn{1}{c|}{$\E\big[U_2^k\big]/r^k$} & 
	\multicolumn{1}{c|}{$\E\big[U_3^k\big]/r^k$}\\[1pt] \hline\hline
 1 & 9.9232888058187711084              & 13.606226799878091189 \\
 2 & 102.96648551991466206              & 199.63369601685873413 \\
 3 & 1110.1715673108248830              & 3094.8106779481943393 \\
 4 & 12355.455260295394611              & 49903.060116964320575 \\
 5 & 141074.96324382298144              & 827860.92690590516817 \\
 6 & $1.6440752317143806830 \cdot 10^6$ & $1.4025517639333515857 \cdot 10^7$ \\
 7 & $1.9474969898992456378 \cdot 10^7$ & $2.4146079537555357859 \cdot 10^8$ \\
 8 & $2.3373182556510259688 \cdot 10^8$ & $4.2096527665994963840 \cdot 10^9$ \\
 9 & $2.8351332586002858086 \cdot 10^9$ & $7.4140462419039734138 \cdot 10^{10}$ \\
10 & $3.4691601000674476672 \cdot 10^{10}$ & $1.3167229871972133743 \cdot 10^{12}$ \\ \hline
\end{tabular}
\renewcommand{\arraystretch}{1}
\caption{Numerical values of $\EUk/r^k$ for $\ka = 2,\,3$}
\label{Ta:Table}
\end{center}
\end{table}



\bigskip\noindent
{\bf Uwe B\"asel}, Hochschule f\"ur Technik, Wirtschaft und Kultur Leipzig, Fakult\"at f\"ur Maschinenbau und Energietechnik, PF 30\,11\,66, 04251 Leipzig, Germany, \texttt{uwe.baesel@htwk-leipzig.de}

\end{document}